\newtheorem{thm}{Theorem}[section]
\newtheorem{lem}[thm]{Lemma}
\newtheorem{prop}[thm]{Proposition}
\theoremstyle{definition}
\newtheorem*{defi}{Definition}
\newtheorem*{obs}{Observation}
\newcommand{\R}{\mathbb R}
\newcommand{\C}{\mathbb C}
\newcommand{\N}{\mathbb N}
\begin{document}

\title{On approximations by shifts of the Gaussian function.}%

\author{Gerard Ascensi}
\thanks{The author is supported by MTM2005-08984-C02-01 and 2005SGR00611 projects and by the Fundaci\'{o} Cr\`{e}dit Andorr\`{a}.}
\address{Faculty of Mathematics, University of Vienna, Nordbergstrasse 15, 1090 Vienna, Austria}
\email{gerard.ascensi@univie.ac.at}

\subjclass{41A30,42A65}
\keywords{Gaussian, Fock space, translates}

\begin{abstract}
The paper study the discrete sets of translations of the Gaussian function that span the spaces $L^1(\R)$ and $L^2(\R)$.
\end{abstract}

\maketitle

\section{Introduction.}

The study of the sets of translations of functions that span $L^p(\R)$ spaces is a classical topic in harmonic analysis. One wants to determine under what conditions a sequence $\{\varphi(t-\lambda)_{\lambda\in\Lambda}$ span these spaces for a function $\varphi\in L^p(\R)$ and a set $\Lambda\subset\R$. Wiener's Tauberian theorem \cite{wiener} asserts that $\{\varphi(t-\lambda)\}_{\lambda\in\R}$ span $L^2(\R)$ if and only if the set of points at which the Fourier transform of $\varphi$ vanishes has measure $0$, and span $L^1(\R)$ if this set is empty. It is natural to consider this problem when $\Lambda$ is a discrete set. We give an overview of the results on this topic in the last section.

This paper deals with the spanning properties of a discrete set of translations of the Gaussian function $\phi(t)=e^{-\pi t^2}$.  This problem was first studied by Zalik in \cite{Zal}. He considered a more general point of view (functions like the Gaussian), and he obtained necessary and sufficient conditions for the set $\Lambda$ that depend on
\begin{equation*}
S(\varepsilon)=\sum_{\lambda\in\Lambda}\frac{1}{|\lambda|^{2+\varepsilon}}.
\end{equation*}
He proved that a necessary condition for $\{\phi(t-\lambda)\}_{\lambda\in\Lambda}$ to span $L^2(\R)$ is the divergence of $S(0)$ and that a necessary condition is the divergence of $S(\varepsilon)$ for some $\varepsilon>0$.

The question is what happens with the sets such that $S(0)$ is divergent but $S(\varepsilon)$ is convergent for all positive $\varepsilon$. These kind of sets are known as sets of order $2$. The density of such a set is defined as:
\begin{equation*}
\Delta(\Lambda)=\lim_{r\to\infty}\frac{|\Lambda\cap(-r,r)|}{r^2}=\lim_{r\to\infty}\frac{n_{\Lambda}(r)}{r^2}.
\end{equation*}
In this expression $n_{\Lambda}$ is the counting function of $\Lambda$. If we write $\Lambda=\{\lambda_n\}_{n\in\N}$ with $|\lambda_n|\leqslant|\lambda_{n+1}|$, the density of $\Lambda$ coincides with $\lim_{n\to\infty}n/|\lambda_n|^2$. $S(0)$ is divergent and $S(\varepsilon)$ is convergent for those sets $\Lambda$ such that $0<\Delta(\Lambda)<\infty$. If $S(0)<\infty$ then the density will be $0$ and the divergence of $S(\varepsilon)$ implies infinite density. For sets without density (oscillation of the limit) we can define upper and lower versions, but we will not use this generalization in this note.

Our principal result is the next one:
\begin{thm}\label{teorema}
Let $\Lambda\subset\R$ be a discrete set and $\phi(t)=e^{-\pi t^2}$ the Gaussian function. Let $\Lambda^+=\Lambda\cap[0,\infty)$ and $\Lambda^-=\Lambda\cap(-\infty,0]$ the positive and negative parts of $\Lambda$.
\begin{enumerate}
\item If $\Delta(\Lambda^+)<1/2$ and $\Delta(\Lambda^-)< 1/2$ then $\{\phi(t-\lambda)\}_{\lambda\in\Lambda}$ does not span $L^2(\R)$.

\item If $\Delta(\Lambda^+)>1/2$ or $\Delta(\Lambda^-)>1/2$ then $\{\phi(t-\lambda)\}_{\lambda\in\Lambda}$ spans $L^2(\R)$.
\end{enumerate}
Both statements apply also to $L^1(\R)$.
\end{thm}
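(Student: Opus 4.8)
The plan is to convert the spanning problem into a uniqueness problem for an explicit space of entire functions, and then to extract the threshold $1/2$ from the behaviour of the indicator of functions of order $2$.

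First I would dualize. By Hahn--Banach, $\{\phi(t-\lambda)\}_{\lambda\in\Lambda}$ fails to span $L^2(\R)$ iff there is a nonzero $g\in L^2(\R)$ with $\int_{\R} g(t)\,e^{-\pi(t-\lambda)^2}\,dt=0$ for every $\lambda\in\Lambda$. Writing $u=g*\phi$ (the Gauss--Weierstrass transform of $g$), this says $u$ vanishes on $\Lambda$. Since $\widehat{u}(\xi)=\widehat{g}(\xi)\,e^{-\pi\xi^2}$ with $\widehat g\in L^2$, the function $u(z)=\int \widehat g(\xi)e^{-\pi\xi^2}e^{2\pi i\xi z}\,d\xi$ extends to an entire function obeying $|u(x+iy)|\le C\|g\|_2\,e^{\pi y^2}$. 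Equivalently $u$ lies in the Fock-type space $\mathcal H=\{u\ \text{entire}:\iint_{\R^2}|u(x+iy)|^2 e^{-2\pi y^2}\,dx\,dy<\infty\}$, the image of $L^2(\R)$ under $g\mapsto g*\phi$ and, up to the unitary twist $u\mapsto e^{\pi z^2/2}u$, the classical Bargmann--Fock space. Thus $\{\phi(\cdot-\lambda)\}$ spans $L^2(\R)$ iff $\Lambda$ is a uniqueness set for $\mathcal H$.

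The heart of the matter is to tie the density of the \emph{real} zeros of $u\in\mathcal H$ to its growth. The bound $|u(x+iy)|\le Ce^{\pi y^2}$ says the indicator $h(\theta)=\limsup_{r\to\infty}r^{-2}\log|u(re^{i\theta})|$ satisfies $h(\theta)\le\pi\sin^2\theta$. On the other hand, by Levin's theory of entire functions of order $2$, zeros on the ray $\arg z=0$ of density $\Delta(\Lambda^+)=D_+$ (that is, $n_{\Lambda^+}(r)\sim D_+r^2$) force $h'(0^+)-h'(0^-)\ge 4\pi D_+$, and similarly $h'(\pi^+)-h'(\pi^-)\ge 4\pi D_-$ on the negative ray. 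Writing $h(\theta)=\alpha\cos2\theta+\beta\sin2\theta$ near $\theta=0$ (the general solution of $h''+4h=0$ between jumps) and imposing $h\le\pi\sin^2\theta$, a short computation gives $(\alpha+\tfrac\pi2)^2+\beta^2\le(\tfrac\pi2)^2$, so $\beta\le\tfrac\pi2$ and the jump $h'(0^+)-h'(0^-)=4\beta\le 2\pi$; trigonometric convexity shows this is the binding constraint. Comparing $4\pi D_+\le 4\beta\le 2\pi$ yields $D_+\le 1/2$, and symmetrically $D_-\le 1/2$. The extremal profile is carried by a Gaussian factor $e^{-\pi z^2/2}$ bearing a density-$\tfrac12$ real zero set, touching the envelope only along the diagonals $\theta=\pm\pi/4$; note that the naive candidate $\sin(\pi z^2/2)$ has indicator $\tfrac\pi2|\sin2\theta|$, which exceeds $\pi\sin^2\theta$ near the real axis, so it is \emph{not} in $\mathcal H$.

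This gives both directions. For part (1), with $D_\pm<1/2$, I would form the genus-$2$ canonical product $P$ over $\Lambda$ (convergent since $\Lambda$ has order $2$) and set $u=e^{-\pi z^2/2}P$, choosing the quadratic factor so that the indicator stays \emph{strictly} below $\pi\sin^2\theta$; strict subcriticality lets one keep the inequality strict, which suffices to verify the weighted $L^2$ bound defining $\mathcal H$ and hence to produce $g\in L^2$, $g\perp\{\phi(\cdot-\lambda)\}$. For part (2), if say $D_+>1/2$, any $u\in\mathcal H$ vanishing on $\Lambda$ would need a jump exceeding $2\pi$ at $\theta=0$, which $h\le\pi\sin^2\theta$ forbids; I would make this rigorous by a Carleman/Jensen estimate in a sector (or half-plane), bounding the sums $\sum_{\lambda\in\Lambda^+,\,\lambda\le r}$ by boundary integrals of $\log|u|$ that the growth controls, and conclude that $\Lambda$ is a uniqueness set, i.e. the translates span. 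The $L^1(\R)$ statements follow the same route with an $L^\infty$ annihilator and the sup-norm version of $\mathcal H$ (functions with $|u(x+iy)|\lesssim e^{\pi y^2}$), which has the identical envelope and hence the identical threshold.

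I expect the main obstacle to be the sharp two-sided density--growth correspondence at the \emph{critical} density: turning the soft indicator inequality into (a) genuine membership in the weighted space for the construction in part (1), and (b) a clean Carleman-type lower bound that forces growth above the envelope in part (2). Matching both constants exactly to $1/2$, and treating the borderline at density exactly $1/2$ as well as order-$2$ sets lacking a density, is the delicate part.
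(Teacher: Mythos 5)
Your reduction to a uniqueness problem for the Fock-type space $\mathcal H$ is correct and is the same as the paper's Bargmann-transform setup, but your construction for part (1) has a fatal gap. For a \emph{real} set $\Lambda$ with $\Delta(\Lambda^{\pm})>0$, the genus-$2$ canonical product $P(z)=\prod_{\lambda\in\Lambda}G(z/\lambda;2)$ is of \emph{maximal} (infinite) type of order $2$, and no factor $e^{az^2+bz+c}$ can repair this. The reason is Lindel\"of's theorem for integral order: finite type forces the sums $\sum_{|\lambda|\le r}\lambda^{-2}$ (corrected by the coefficient of $z^2$ in the exponential factor) to stay bounded; but for real $\lambda$ every term $\lambda^{-2}$ is positive, so $\sum_{|\lambda|\le r}\lambda^{-2}\sim 2\bigl(\Delta(\Lambda^+)+\Delta(\Lambda^-)\bigr)\log r\to\infty$, and a constant cannot cancel a divergent logarithm. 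Concretely, $\log|P|$ grows like $c\,x^2\log|x|$ at generic real points, which is super-Gaussian, while $e^{-\pi z^2/2}$ changes the modulus only by $e^{O(|z|^2)}$; so $u=e^{-\pi z^2/2}P$ lies in neither $\mathcal H$ nor the Fock space. The idea you are missing is precisely the crux of the paper's proof of (1): one must add zeros \emph{off} the real axis to cancel the divergent sum. The paper takes $\Gamma=\Lambda\cup i\Lambda$, so each added zero contributes $(i\lambda)^{-2}=-\lambda^{-2}$ and $\sum_{\gamma\in\Gamma,\,|\gamma|\le r}\gamma^{-2}\equiv0$; Levin's theory then gives a canonical product $\Pi_\Gamma$ of completely regular growth with indicator $\pi\Delta|\sin2\theta|$, hence type $\pi\Delta<\pi/2$, which is in the Fock space. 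Your "extremal profile" remark reflects the same misconception: there is no finite-type order-$2$ entire function whose zeros are real of positive density; the genuine extremal $e^{-\pi z^2/2}\sin(\tfrac{\pi}{2}z^2)$ carries imaginary zeros of density $1/2$ in addition to the real ones.

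Your part (2) is salvageable, but not by the mechanism you state first: the inequality "real zeros of density $D_+$ force $h'(0^+)-h'(0^-)\ge 4\pi D_+$" is a fact about functions of completely regular growth, not something you may cite for an arbitrary $u\in\mathcal H$ vanishing on $\Lambda$ (your sinusoid computation also presupposes that the measure $h''+4h$ is atomic near $\theta=0$, which you do not know). The Carleman-formula fallback, however, does work: applying Carleman's formula to $g(w)=u(\sqrt w)$ in the right half-plane, the zeros give $\sum_{\lambda\le\sqrt R}\lambda^{-2}\sim D_+\log R$ on one side, the boundary integral along $\arg z=\pm\pi/4$ (where $|u|\le Ce^{\pi|z|^2/2}$) gives $\tfrac12\log R+O(1)$, and the arc term is $O(1)$, forcing $D_+\le\tfrac12$. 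If carried out, this is a genuinely different and self-contained alternative to the paper's argument, which instead divides $F$ by the symmetrized product $\prod(1-z^4/\lambda^4)$ and uses $2$-trigonometric convexity of the quotient's indicator. Finally, your $L^1$ claim is only half right: for part (2) it is fine, since (as in the paper) only the growth bound on transforms of $L^\infty$ functions is used; but for part (1) membership of $u$ in the sup-norm space $|u(x+iy)|\lesssim e^{\pi y^2}$ does \emph{not} make $u$ the Gaussian convolution of an $L^\infty$ function (the paper warns of exactly this), and the paper instead settles $L^1$ part (1) by the convolution identity $\phi_a\ast\phi_b=c\,\phi$ and rescaling, reducing it to the $L^2$ statement.
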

The proof for $L^2(\R)$ is in section \ref{seccio2} the one for $L^1(\R)$ in section \ref{seccio3}. In section \ref{seccio4} we find generalizations (that improve Zalik's result) and a brief discussion of related problems.

\section{The $L^2(\R)$ case.}\label{seccio2}

The key tool to study the problem is the Bargmann transform:
\begin{equation*}
Bf(z)=2^{\frac{1}{4}}\int_{\R}f(t)e^{2\pi tz-\pi t^2-\frac{\pi}{2}z^2}\,dt,
\end{equation*}
that gives (see \cite{Fol,Charly}) an isomorphism between $L^2(\R)$ and the Fock space:
\begin{equation*}
\mathcal{F}=\left\{f: f \text{ is entire and } \|f\|_{\mathcal{F}}^2=\int_{\C}|f(z)|^2e^{-\pi|z|^2}\, dm(z)<\infty\right\},
\end{equation*}
where $dm$ is the area measure of the plane. This isomorphism allows us to transform our problem into a question of uniqueness sets of the Fock space, so we will be able to use theory of entire functions. Let us precise this statement.
\begin{lem}
Set $\phi(t)=e^{-\pi t^2}$ and let $\Lambda\subset\R$ be a discrete set. Then $\{\phi(t-\lambda)\}_{\lambda\in\Lambda}$ is a complete set for $L^2(\R)$ if and only if $\Lambda$ is an uniqueness set for the Fock space.
\end{lem}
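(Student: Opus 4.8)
The plan is to use the Bargmann transform as the bridge, exploiting the fact that it is an isometric isomorphism from $L^2(\R)$ onto the Fock space $\mathcal{F}$. The core of the argument is to compute explicitly what the Bargmann transform does to a translated Gaussian $\phi(t-\lambda)$, and to show that completeness of a family in $L^2(\R)$ translates directly into a uniqueness (i.e.\ zero-set) condition on $\mathcal{F}$.

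First I would compute $B[\phi(\cdot-\lambda)](z)$ for $\lambda\in\R$. Substituting $\phi(t-\lambda)=e^{-\pi(t-\lambda)^2}$ into the integral defining $Bf(z)$ and completing the square in the exponent, I expect to obtain (up to a nonvanishing normalizing factor depending only on $z$ and $\lambda$) an expression of the form $B[\phi(\cdot-\lambda)](z)=c\,e^{a\lambda z+b\lambda^2}e^{-\tfrac{\pi}{2}z^2}\cdot(\text{Gaussian constant})$, which after simplification is a nowhere-vanishing entire function of $z$ of the shape $e^{\alpha z - \beta}$ times a constant; concretely it should come out proportional to the reproducing-kernel-type function $e^{\pi \lambda z - \tfrac{\pi}{2}\lambda^2}$. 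The key structural observation is that evaluation of $Bg$ against this transformed Gaussian reduces, via the reproducing property of $\mathcal{F}$, to point evaluation of $Bg$ at the point $z=\lambda$ (up to a nonzero factor).

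From there the equivalence is essentially a dualization. A family $\{\phi(t-\lambda)\}_{\lambda\in\Lambda}$ is complete in $L^2(\R)$ if and only if the only $g\in L^2(\R)$ orthogonal to every member is $g=0$. Applying $B$ and using that it is a unitary isomorphism, orthogonality $\langle g,\phi(\cdot-\lambda)\rangle=0$ becomes $\langle Bg, B\phi(\cdot-\lambda)\rangle_{\mathcal{F}}=0$, and by the reproducing-kernel computation above this is equivalent to $Bg(\lambda)=0$ (the nonzero scalar factor drops out). Thus $g\perp\{\phi(\cdot-\lambda)\}_{\lambda\in\Lambda}$ if and only if the entire function $F=Bg\in\mathcal{F}$ vanishes on all of $\Lambda$. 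The family is therefore complete precisely when the only $F\in\mathcal{F}$ vanishing on $\Lambda$ is $F\equiv0$, which is exactly the statement that $\Lambda$ is a uniqueness set for $\mathcal{F}$.

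The main obstacle I anticipate is the bookkeeping in the first step: one must complete the square carefully and verify that the transformed Gaussian really is a nonvanishing multiple of the reproducing kernel $K(z,\lambda)=e^{\pi z\bar{\lambda}}$ evaluated appropriately, so that the scalar factor linking $\langle Bg, B\phi(\cdot-\lambda)\rangle_{\mathcal{F}}$ to $Bg(\lambda)$ is genuinely nonzero for every $\lambda\in\Lambda$. Since $\lambda$ is real here, $\bar\lambda=\lambda$, which should make the identification with point evaluation clean, but the precise constants (the $2^{1/4}$ normalization and the $\tfrac{\pi}{2}z^2$ term) need to be tracked so the reproducing property applies verbatim. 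Once that nonvanishing factor is confirmed, the equivalence follows formally from unitarity of $B$ and the definition of a uniqueness set.
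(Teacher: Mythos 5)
Your proof is correct, and its skeleton --- Bargmann transform plus Hilbert-space duality --- is the same as the paper's; the difference is in how the key identity $\langle g,\phi(\cdot-\lambda)\rangle = c(\lambda)\,Bg(\lambda)$, $c(\lambda)\neq 0$, gets established. You compute $B[\phi(\cdot-\lambda)](z)$ (correctly: completing the square gives $2^{-1/4}e^{\pi\lambda z-\frac{\pi}{2}\lambda^2}$), identify it as a nonvanishing multiple of the reproducing kernel $K_\lambda(z)=e^{\pi\lambda z}$, and then invoke two extra pieces of structure: that $B$ preserves inner products (unitarity, not merely bijectivity) and that $\mathcal{F}$ has the reproducing property $\langle F,K_\lambda\rangle_{\mathcal{F}}=F(\lambda)$. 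The paper reaches the same identity in one step with less machinery: for real $x$ the exponent in the definition of $Bf(x)$ is already $-\pi(t-x)^2+\frac{\pi}{2}x^2$, so $Bf(x)=2^{1/4}e^{\frac{\pi}{2}x^2}\langle f,\phi_x\rangle$ by direct evaluation, and then only bijectivity of $B$ onto $\mathcal{F}$ is needed to convert ``$g$ orthogonal to all translates implies $g=0$'' into ``$F\in\mathcal{F}$ vanishes on $\Lambda$ implies $F\equiv 0$.'' So the paper's route is leaner, while yours is slightly heavier but more conceptual: recognizing $B\phi_\lambda$ as a coherent-state (reproducing) kernel is the standard mechanism linking Gaussian-window time-frequency analysis to the Fock space, and it generalizes beyond this specific computation. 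Either way the nonzero factor $2^{\pm 1/4}e^{\pm\frac{\pi}{2}\lambda^2}$ drops out, and completeness is equivalent to $\Lambda$ being a uniqueness set for $\mathcal{F}$, as required.
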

\begin{proof}
We observe that, for $x\in\R$, we have that
\begin{equation*}
Bf(x)=2^{\frac{1}{4}}e^{\frac{\pi}{2}x^2}\langle f,\phi_x\rangle
\end{equation*}
for any $f\in L^2(\R)$. Since any function of the Fock space can be represented in this way in the real line, the values there determine the function and the factor $2^{\frac{1}{4}}e^{\frac{\pi}{2}x^2}\neq 0$, the result follows by duality.
\end{proof}
In fact, the uniqueness sets are sets that can not be included in any zero set of the Fock space. There is no description of the zero sets of the Fock space. We will obtain some properties of these sets using standard theory of entire functions (Hadamard decomposition and Phragm\'{e}n--Lindel\"{o}f principle). Good references for this material are \cite{Lev} or \cite{Boas}.

First one has to observe that functions of the Fock space have order at most $2$, and type at most $\pi/2$.  Also, it is easy to check that any entire function of order less or equal to $2$ and type less than $\pi/2$ is in this space (see \cite{Zhu}). To prove the theorem one uses the properties of the indicator of an entire function $F$ of order $2$:
\begin{equation*}
h_F(\theta)=\limsup_{r\to\infty}\frac{\log|F(re^{i\theta})|}{r^2}.
\end{equation*}
The main properties that we will use are that the indicator (of an order $2$ function) is a $2$-trigonometrically convex function (and therefore continuous) and that the $\sup h_F$ is the type of $F$. We will also use properties of functions of completely regular growth ($\log |F(re^{i\theta})|=h_F(\theta)r^2+o(r^2)$ except for a $C^0$-set). For details and precise statements of all this see \cite{Boas,Lev}.

\begin{proof}[Proof of (1)]
The idea is to construct a function $F$ in the Fock space such that $F(\lambda)=0$ for all $\lambda\in\Lambda$. We suppose for simplicity that $\Delta(\Lambda^+)=\Delta(\Lambda^-)=\Delta$ (if not, one can add points to $\Lambda$, or alternatively make the same arguments but the calculations would not be so easy). We define the set $\Gamma=\Lambda\cup i\Lambda$. This set has angular density
\begin{equation*}
\Delta(\theta)=\lim_{r\to\infty}\frac{|\Gamma\cap\{|z|<r,\arg z<\theta\}|}{r^2}=\Delta\sum_{k=0}^3\chi_{[\frac{k\pi}{2},2\pi)}(\theta).
\end{equation*}
Moreover,
\begin{equation*}
\sum_{\gamma\in\Gamma,|\gamma|<r}\frac{1}{\gamma^2}=0\qquad\forall r.
\end{equation*}
In this conditions one can calculate the indicator function of the Weierstrass canonical product $\Pi_{\Gamma}(z)$ that vanishes in this set:
\begin{equation*}
h_{\Pi}(\theta)=\pi\Delta|\sin 2\theta|,
\end{equation*}
and $\Pi_{\Gamma}$ will be a function of completely regular growth. If $\Delta<1/2$ then $h_{\Pi}<\frac{\pi}{2}$ and $\Pi_{\Gamma}$ is in the Fock space.
\end{proof}

\begin{proof}[Proof of (2)]
We can assume without loss of generality that $\Lambda\subset(0,\infty)$. Let $\Delta=\Delta(\Lambda)$. We define the Weierstrass function $\Pi(z)=\Pi_{\lambda\in\Lambda}(1-z^4/\lambda^4)$. This product defines an entire function ($\Lambda^4$ is a set of order $1/2$) with completely regular growth and its indicator function is $h_{\Pi}(\theta)=\Delta\pi|\sin 2\theta|$. We will compare this function with any other that vanishes in $\Lambda$.

Let $F(z)$ be an entire function of the Fock space such that $F(\Lambda)=0$. As $\Lambda$ has order $2$, Hadamard theorem ensures us that $F$ has order at least $2$. We define:
\begin{equation*}
\varphi(z)=\frac{F(z)}{\Pi(z)}.
\end{equation*}
$\varphi$ is an holomorphic function in any sector of the form $-\pi/2<-\alpha\leqslant\arg z\leqslant\alpha<\pi/2$, and, as $h_{\pi}(\pm\alpha)>0$, in this sector it has order $2$ and mean type (see theorem 5 of section 11.3 in \cite{Lev}). We fix $\alpha>\pi/4$. In the sector $\arg z\in[-\alpha,\alpha]$ we write $F(z)=\varphi(z)\Pi(z)$.

Using that the indicator function of a product of two functions is the sum of their indicator functions if one of them has completely regular growth, we have that:
\begin{equation*}
h_F(\theta)=h_{\varphi}(\theta)+h_{\Pi}(\theta)=h_{\varphi}(\theta)+\pi\Delta|\sin 2\theta|.
\end{equation*}
As $h_{\varphi}$ is a $2$-trigonometrically convex function (because it is the indicator of an order $2$ function), we have that $h_{\varphi}(\theta)+h_{\varphi}(\theta+\pi/2)\geqslant 0$. Then $h_{\varphi}(\pi/4)\geqslant 0$ or $h_{\varphi}(-\pi/4)\geqslant 0$. Thus:
\begin{equation*}
h_{F}(-\frac{\pi}{4})\geqslant \pi\Delta \qquad\text{ or }\qquad h_{F}(\frac{\pi}{4})\geqslant \pi\Delta.
\end{equation*}
If $F$ is in the Fock space we can deduce from the former equation that $\Delta\leqslant1/2$, or equivalently, that $F$ cannot be in the Fock space if $\Delta>1/2$.
\end{proof}

\section{The $L^1(\R)$ case.}\label{seccio3}

By the duality principle,  $\{\phi(t-\lambda)\}_{\lambda\in\Lambda}$ span $L^1(\R)$ if and only if given $f\in L^{\infty}(\R)$, if $\langle f,\phi(t-\lambda)\rangle=0$ for all $\lambda\in\Lambda$ then $f=0$. Then, as $2^{\frac{1}{4}}e^{\frac{\pi}{2}x^2}\langle f,\phi_x\rangle=Bf(x)$, one has to look at the image of $L^{\infty}(\R)$ for the Bargmann transform. However, the Bargmann transform is an isomorphism only in the space $L^2(\R)$. We will obtain the proof of (1) using convolution methods, because one can not ensure that the function $F$ constructed in the former section will belong to the image of $L^{\infty}(\R)$. The proof of (2) will use that $B\bigl(L^{\infty}(\R)\bigr)$ is included in an appropriate space.
\begin{lem}\label{lemma1}
Assume $h\in L^1(\R)$ and that $\widehat{h}(\xi)\neq
0$ for all $\xi$. If $f\in L^p(\R), 1\leq p\leq \infty$ and
the convolution $f*h$ is zero then $f=0$. The same holds if $h\in
L^2(\R)$ and $\widehat{h}(\xi)\neq 0$ almost everywhere.
\end{lem}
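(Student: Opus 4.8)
The plan is to reduce the statement, uniformly in $p$, to the single clean assertion that $f\ast g=0$ for \emph{every} $g\in L^1(\R)$, and then to conclude $f=0$ by convolving against an approximate identity. The passage from one vanishing convolution to all of them is precisely where Wiener's Tauberian theorem enters, and this is the only nontrivial input.

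First I would record the two elementary facts I need. By Young's inequality the map $g\mapsto f\ast g$ is bounded from $L^1(\R)$ into $L^p(\R)$, with $\|f\ast g\|_p\le\|f\|_p\,\|g\|_1$; in particular it is continuous, for every $1\le p\le\infty$. Moreover convolution is associative in this range, so from the hypothesis $f\ast h=0$ I immediately obtain $f\ast(h\ast k)=(f\ast h)\ast k=0$ for every $k\in L^1(\R)$.

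Next comes the heart of the argument. Because $\widehat h(\xi)\neq 0$ for every $\xi$, Wiener's Tauberian theorem tells us that the closed ideal generated by $h$ in the convolution algebra $L^1(\R)$ — equivalently, the closure of $\{h\ast k:k\in L^1(\R)\}$ — is all of $L^1(\R)$. Combining this density with the continuity of $g\mapsto f\ast g$ from the previous step, and with the fact that $f\ast g$ already vanishes on the dense set $h\ast L^1(\R)$, I conclude that $f\ast g=0$ for every $g\in L^1(\R)$. The main obstacle is exactly this density statement; once it is granted the rest is formal, and invoking Wiener lets me sidestep the fact that the Fourier transform is not directly available on $L^\infty$, where a naive argument via $\widehat{f\ast h}=\widehat f\,\widehat h$ breaks down (the product of a general tempered distribution with the merely continuous multiplier $\widehat h$ is not defined).

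Finally I would take $g=\varphi_\varepsilon$ to be an approximate identity. For $1\le p<\infty$ one has $f\ast\varphi_\varepsilon\to f$ in $L^p(\R)$, while for $p=\infty$ the Lebesgue differentiation theorem gives $f\ast\varphi_\varepsilon\to f$ almost everywhere; since each $f\ast\varphi_\varepsilon$ is identically zero, $f=0$ in every case. The $L^2$ variant is identical in spirit: the weaker hypothesis $\widehat h\neq 0$ almost everywhere makes $\{h\ast k:k\in L^1(\R)\}$ dense in $L^2(\R)$ by the $L^2$ form of the Tauberian theorem, and the same approximate-identity argument then finishes the proof.
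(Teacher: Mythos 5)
Your proof of the first assertion ($h\in L^1(\R)$, $\widehat h$ nowhere zero) is correct and is essentially the paper's own argument: both hinge on Wiener's Tauberian theorem. The differences are cosmetic. The paper settles $1\le p\le 2$ first by Hausdorff--Young and reserves Wiener for the remaining exponents, while you argue uniformly in $p$; the paper encodes Wiener through the closed translation-invariant subspace $E=\{g\in L^1(\R):f\ast g=0\}$, which contains $h$ and hence equals $L^1(\R)$, while you use the equivalent statement that the closure of the ideal $h\ast L^1(\R)$ is all of $L^1(\R)$, combined with the continuity of $g\mapsto f\ast g$ from Young's inequality. These are the same theorem in two standard guises, since the closed ideal generated by $h$ coincides with the closed span of its translates. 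Your approximate-identity endgame (norm convergence for $p<\infty$, a.e. convergence at Lebesgue points for $p=\infty$) correctly fills in what the paper compresses into ``and this implies $f=0$.''

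The genuine gap is in the $L^2$ variant. Your reduction needs $g\mapsto f\ast g$ to be defined and continuous on $L^2(\R)$, and for $f\in L^p(\R)$ with $p>2$ it is neither: Young's inequality requires $1/p+1/2\geqslant 1$, and for $f\in L^\infty(\R)$, $g\in L^2(\R)\setminus L^1(\R)$ the convolution integral can fail to converge at all. So the density of $h\ast L^1(\R)$ in $L^2(\R)$ yields $f\ast g=0$ for all $g\in L^2(\R)$ only when $1\le p\le 2$; for $p>2$ your argument cannot even start. Moreover, no argument can close the case $p=\infty$, because the second assertion is false as stated there: take $h(t)=te^{-\pi t^2}$, so that $h\in L^2(\R)$ and $\widehat h(\xi)=c\,\xi e^{-\pi\xi^2}$ vanishes only at the origin, and take $f\equiv 1$; then $f\ast h\equiv\int_{\R} h=0$ by oddness, yet $f\neq 0$. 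It is precisely the hypothesis $\widehat h(\xi)\neq 0$ for \emph{every} $\xi$, including $\xi=0$, that blocks such examples in the $L^1$ case, and that hypothesis cannot even be formulated pointwise for a general $h\in L^2(\R)$. In fairness, the paper's own justification of this clause is a one-line appeal to Beurling's theorem that glosses over the same difficulty; note, however, that only the $L^1$ clause of the lemma is used later (in Lemma~\ref{generaciopertotselsp} the convolving kernel lies in $L^1(\R)\cap L^\infty(\R)$ and has nowhere-vanishing transform), so the paper's main results do not depend on the problematic part.
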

\begin{proof}
For $1\leq p\leq 2$, the Fourier transform of $f$ is a function in
$L^{q}, \frac 1p+\frac 1q=1$, and the Fourier transform of $f*h$
is $\widehat{f}\,\widehat{h}$, so the lemma follows. In the
general case, we consider the closed subspace $E$ of $L^1(\R)$
consisting of functions $g$ such that $f*g=0$. Since $E$ is
translation invariant and contains $h$, Wiener's Tauberian theorem
implies that $E$ is the whole $L^1(\R)$, and this implies $f=0$.
When $h\in L^2(\R)$ we use Beurling's theorem describing all
closed translation-invariant subspaces of $L^2(\R)$ to reach the
same result.
\end{proof}
\begin{lem}\label{generaciopertotselsp}
Assume $h\in L^1(\R)\cap L^{\infty}(\R)$ and $\widehat{h}(\xi)\neq 0 $ for
every $\xi$. Then if
$\{\varphi(t-\lambda), \lambda\in\Lambda \}$ span $L^1(\R)$ then
$\{(\varphi\ast h)(t-\lambda), \lambda\in\Lambda\}$ span $L^p(\R)$
for $1\leqslant p<\infty$.
\end{lem}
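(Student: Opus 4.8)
The plan is to argue by duality. Since we work in $L^p(\R)$ with $1\leqslant p<\infty$, a family has dense span if and only if the only functional $g$ in the dual $L^q(\R)$ (with $\tfrac1p+\tfrac1q=1$) that annihilates every member is $g=0$. First observe that $\varphi\in L^1(\R)$ and $h\in L^1(\R)\cap L^{\infty}(\R)$ give $\varphi\ast h\in L^1(\R)\cap L^{\infty}(\R)\subset L^p(\R)$, so the translates $(\varphi\ast h)(t-\lambda)$ are genuine elements of $L^p(\R)$. Fix $g\in L^q(\R)$ with $\langle(\varphi\ast h)(\cdot-\lambda),g\rangle=0$ for every $\lambda\in\Lambda$; the goal is to deduce $g=0$.

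The key step is to transfer the convolution from $\varphi$ onto $g$. Writing $(\varphi\ast h)(\cdot-\lambda)=\bigl(\varphi(\cdot-\lambda)\bigr)\ast h$, since translation commutes with convolution, and using the adjoint of the convolution operator with respect to the pairing $\langle f,g\rangle=\int f\overline g$, one obtains
\begin{equation*}
\langle(\varphi\ast h)(\cdot-\lambda),g\rangle=\langle\varphi(\cdot-\lambda),g\ast\widetilde h\rangle,\qquad\widetilde h(t)=\overline{h(-t)}.
\end{equation*}
The interchange of integrals is justified by H\"{o}lder's inequality, since $\|\varphi\|_1\|h\|_p\|g\|_q<\infty$ (note $h\in L^1\cap L^{\infty}\subset L^p$). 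Setting $\psi=g\ast\widetilde h$, the hypothesis becomes $\langle\varphi(\cdot-\lambda),\psi\rangle=0$ for all $\lambda\in\Lambda$. By Young's inequality, $\widetilde h\in L^p$ and $g\in L^q$ with $\tfrac1p+\tfrac1q=1$ force $\psi\in L^{\infty}(\R)$. Now I invoke the hypothesis that $\{\varphi(t-\lambda)\}$ spans $L^1(\R)$: by $L^1$--$L^{\infty}$ duality its annihilator in $L^{\infty}(\R)$ is trivial, hence $\psi=g\ast\widetilde h=0$.

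It remains to strip off $\widetilde h$. Since $h\in L^1(\R)$, so is $\widetilde h$, and $\widehat{\widetilde h}(\xi)=\overline{\widehat h(\xi)}\neq0$ for every $\xi$ by hypothesis. As $g\in L^q(\R)$ with $1<q\leqslant\infty$ and $g\ast\widetilde h=0$, Lemma~\ref{lemma1} (applied with $\widetilde h$ in the role of $h$) yields $g=0$. Thus the annihilator of $\{(\varphi\ast h)(t-\lambda)\}$ in $L^q(\R)$ is trivial, and by the duality reduction the translates span $L^p(\R)$ for every $1\leqslant p<\infty$.

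I expect the main obstacle to be the transfer step: one must recognize that the adjoint of convolution by $h$ is convolution by the reflection $\widetilde h$, and --- crucially --- verify that the transferred functional $\psi=g\ast\widetilde h$ lands in $L^{\infty}(\R)$, since only then can the given $L^1$-spanning property be brought to bear. The assumption $h\in L^{\infty}$ (not merely $h\in L^1$) is precisely what makes this integrability work through Young's inequality, while the non-vanishing of $\widehat h$ is exactly what Lemma~\ref{lemma1} needs to remove $\widetilde h$ at the end.
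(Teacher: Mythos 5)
Your proof is correct and follows essentially the same route as the paper's: a duality reduction, transferring the convolution onto the annihilating functional so that $g\ast\widetilde h\in L^{\infty}(\R)$ is killed by the $L^1$-spanning hypothesis, and then Lemma~\ref{lemma1} to conclude $g=0$. The only differences are cosmetic (you use the sesquilinear pairing and spell out the Fubini and Young's-inequality justifications that the paper leaves implicit).
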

\begin{proof}
By duality, $\{\varphi(t-\lambda),
\lambda\in\Lambda \}$ span $L^p(\R)$ if and only if
\begin{equation*}
\widetilde{\varphi}*f(\lambda)=\int_{\R}f(t) \varphi(t-\lambda)
\,dt=0 \quad \forall \lambda\in\Lambda
\end{equation*}
implies $f=0$ for $f\in L^q(\R)$. Here $\widetilde{\varphi}(t)=\varphi(-t)$.

For $f\in L^q(\R)$,
\begin{equation*}
\int_{\R} f(t)\, (\varphi\ast h)(t-\lambda)\,
dt=\int_{\R}(\widetilde{h}\ast f)(x)\varphi(x-\lambda)\, dx,
\end{equation*}
whence the result follows from lemma \ref{lemma1} (Hausdorff-Young theorem ensures us that $\varphi\ast h\in L^p(\R)$ and $\widetilde{h}\ast f \in L^{\infty}(\R)$).
\end{proof}
\begin{proof}[Proof of (1) for $L^1(\R)$]
Define $\phi_a(t)=2^{1/4}e^{-a\pi t^2}$ for $a>1$ and observe that if $\frac{1}{a}+\frac{1}{b}=1$ then
\begin{equation*}
\phi_a\ast\phi_b(t)=\frac{(a-1)^{\frac{1}{2}}}{a}\phi(t).
\end{equation*}
If $\{\phi(t-\lambda)\}_{\lambda\in\Lambda}$ span $L^1(\R)$ then $\{\phi_a(t-\sigma)\}_{\sigma\in\frac{1}{\sqrt{a}}\Lambda}$ span it too, hence using lemma \ref{generaciopertotselsp} with $h=\phi_b$ we have that $\{\phi(t-\sigma)\}_{\sigma\in\frac{1}{\sqrt{a}}\Lambda}$ span $L^2(\R)$ for all $a>1$.

As $\Delta(\frac{1}{\sqrt{a}}\Lambda^+)=\frac{1}{a}\Delta(\Lambda^+)$ (respectively for $\Lambda^-$), the statement follows from the $L^2(\R)$ case of theorem \ref{teorema}.
\end{proof}
\begin{lem}
Let $f\in L^{p}(\R)$, $1\leqslant p\leqslant\infty$. The function:
\begin{equation*}
Bf(z)=2^{\frac{1}{4}}\int_{\R}f(t)e^{2\pi tz-\pi t^2-\frac{\pi}{2}z^2}\,dt
\end{equation*}
is an entire function and
\begin{equation*}
|Bf(z)|\leqslant\|f\|_p\|\phi\|_q e^{\frac{\pi}{2}|z|^2},\qquad \frac{1}{p}+\frac{1}{q}=1.
\end{equation*}
\end{lem}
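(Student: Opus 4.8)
The plan is to establish the two claims—entireness of $Bf$ and the stated growth bound—both of which flow from a single clean estimate on the modulus of the integration kernel. Writing $z=x+iy$ and taking the real part of the exponent, one has $|e^{2\pi tz-\pi t^2-\frac{\pi}{2}z^2}|=e^{2\pi tx-\pi t^2-\frac{\pi}{2}(x^2-y^2)}$. Completing the square in $t$ rewrites the exponent as $-\pi(t-x)^2+\frac{\pi}{2}(x^2+y^2)$, so that
\begin{equation*}
|e^{2\pi tz-\pi t^2-\frac{\pi}{2}z^2}|=e^{\frac{\pi}{2}|z|^2}e^{-\pi(t-x)^2}=e^{\frac{\pi}{2}|z|^2}\phi(t-x).
\end{equation*}
This one identity drives the whole argument.

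For the growth bound I would insert this majorant and apply H\"older's inequality with the conjugate exponents $p,q$:
\begin{equation*}
|Bf(z)|\leqslant 2^{\frac14}e^{\frac{\pi}{2}|z|^2}\int_{\R}|f(t)|\phi(t-x)\,dt\leqslant 2^{\frac14}e^{\frac{\pi}{2}|z|^2}\|f\|_p\Bigl(\int_{\R}\phi(t-x)^q\,dt\Bigr)^{1/q}.
\end{equation*}
By translation invariance of the $L^q$ norm the last factor is exactly $\|\phi\|_q$, which gives the claimed bound (carrying along the normalizing constant $2^{1/4}$ from the definition of $B$). The endpoint cases are covered under the usual conventions: for $p=\infty$ one takes $q=1$ and bounds $\int_\R|f|\,\phi(\cdot-x)\,dt\leqslant\|f\|_\infty\|\phi\|_1$, and for $p=1$ one uses $\sup_t\phi(t-x)=\|\phi\|_\infty=1$.

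For entireness the same majorant first shows that for each fixed $z$ the integrand is dominated by the $t$-integrable function $2^{\frac14}e^{\frac{\pi}{2}|z|^2}|f(t)|\phi(t-x)$, integrability being the H\"older bound just used, so $Bf(z)$ is well defined. To obtain holomorphy I would invoke Morera's theorem: for an arbitrary closed triangle $T\subset\C$ consider $\oint_T Bf(z)\,dz$ and interchange the contour integral with the $t$-integral. The interchange is legitimate by Fubini, since
\begin{equation*}
\int_T\!\int_{\R}|f(t)|e^{\frac{\pi}{2}|z|^2}\phi(t-x)\,dt\,|dz|\leqslant\|f\|_p\|\phi\|_q\int_T e^{\frac{\pi}{2}|z|^2}\,|dz|<\infty,
\end{equation*}
the inner $t$-integral being bounded uniformly in $z$ by H\"older and $e^{\frac{\pi}{2}|z|^2}$ being bounded on the compact set $T$. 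Since $z\mapsto e^{2\pi tz-\pi t^2-\frac{\pi}{2}z^2}$ is entire for each fixed $t$, its integral around $T$ vanishes; hence $\oint_T Bf=0$ for every $T$, and Morera's theorem yields that $Bf$ is entire.

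I expect the only step requiring genuine care to be the Fubini interchange in the Morera argument: everything hinges on exhibiting a dominating function that is integrable in $t$ and controlled uniformly as $z$ ranges over a compact set, and the completing-the-square identity furnishes precisely such a bound, the $x$-dependence of $\phi(t-x)$ disappearing after the uniform H\"older estimate. An equally valid alternative would be to differentiate under the integral sign, justified on compact subsets by the same majorant together with a bound on $\partial_z$ of the kernel; I would choose Morera as the shorter route.
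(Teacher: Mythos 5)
Your proof is correct and follows essentially the same route as the paper: the same completing-the-square identity $|e^{2\pi tz-\pi t^2-\frac{\pi}{2}z^2}|=e^{\frac{\pi}{2}|z|^2}e^{-\pi(t-x)^2}$ followed by H\"older's inequality with translation invariance of $\|\phi\|_q$. The only difference is that the paper simply asserts that the integral defines an entire function, whereas you justify it via Morera and Fubini (and, like the paper's own proof, your bound carries the harmless extra factor $2^{1/4}$ that the lemma's statement omits).
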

\begin{proof}
We write the Bargmann transform in the following way:
\begin{equation*}
Bf(z)=2^{\frac{1}{4}}e^{\frac{\pi}{2}}\int_{\R}f(t)e^{-\pi(t-z)^2}\,dt.
\end{equation*}
The integral is well defined and defines an entire function for any $f\in L^p(\R)$. As
\begin{equation*}
e^{2\pi t z-\pi t^2-\frac{\pi}{2}z^2}=e^{\frac{\pi}{2}|z|^2}e^{-\pi(t-x)^2}e^{2\pi ity-\pi ixy},
\end{equation*}
we can bound
\begin{equation*}
|Bf(z)|\leqslant e^{\frac{\pi}{2}|z|^2}\int_{\R}|f(t)||e^{-\pi(t-x)^2|}\,dt,
\end{equation*}
thus obtaining the statement by using H\"older inequality.
\end{proof}
\begin{proof}[Proof of (2) for $L^1(\R)$.]
The proof for $L^2(\R)$ applies here without modification because $\Lambda$ is a uniqueness set for entire functions of order $2$ and type $\frac{\pi}{2}$.
\end{proof}
We observe that the theorem can be extended to $L^p(\R)$ for $1<p<2$ and the sufficient condition also for $p>2$. It is an open question whether the discrete sets that can be used to generate $L^p(\R)$ are independent of $p$.

\section{Generalizations and comments.}\label{seccio4}

Using the same ideas we can also prove
\begin{thm}
Fix $n,m\in \N$ and $0<a\leqslant b<\infty$. Let $\phi$ be a function such that
\begin{equation*}
\frac{A}{1+\xi^{2n}}e^{-a\pi\xi^2}\leqslant|\widehat{\varphi}(\xi)|\leqslant B(1+\xi^{2m})e^{-b\pi\xi^2},
\end{equation*}
and $\Lambda$ a discrete set.
\begin{enumerate}
\item If $\Delta(\Lambda^+)<\frac{a}{2}$ and $\Delta(\Lambda^-)<\frac{a}{2}$ then $\{\varphi(t-\lambda)\}_{\lambda\in\Lambda}$ do not span $L^2(\R)$.

\item If $\Delta(\Lambda^+)>\frac{b}{2}$ or $\Delta(\Lambda^-)>\frac{b}{2}$ then $\{\varphi(t-\lambda)\}_{\lambda\in\Lambda}$ span $L^2(\R)$.
\end{enumerate}
This implications also applies to $L^1(\R)$ if we assume $|\widehat{\varphi}'(\xi)|\leqslant C(1+\xi^{2m})e^{-b\pi\xi^2}$.
\end{thm}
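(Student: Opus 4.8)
The plan is to deduce this from Theorem~\ref{teorema} by squeezing $\varphi$ between two Gaussians. Let $\psi$ and $\chi$ be the dilated Gaussians with $\widehat{\psi}(\xi)=e^{-a\pi\xi^2}$ and $\widehat{\chi}(\xi)=e^{-b\pi\xi^2}$; each is handled by Theorem~\ref{teorema} after the dilation $t\mapsto t/\sqrt{a}$ (resp.\ $t/\sqrt{b}$), which rescales the one-sided densities so that the thresholds $a/2$ and $b/2$ appear in place of $1/2$. The role of the hypotheses is that the polynomial factors $1+\xi^{2n}$ and $1+\xi^{2m}$ perturb $\log|\widehat{\varphi}(\xi)|$ only by $O(\log|\xi|)$, which is negligible on the scale $\xi^2$ that governs order and type; hence all the indicator computations see only the Gaussian exponents $a$ and $b$, and $\varphi$ behaves, for spanning purposes, like the bounding Gaussians.

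For part (1) I would transfer the annihilator built for $\psi$. Under $\Delta(\Lambda^+)<a/2$ and $\Delta(\Lambda^-)<a/2$, the proof of (1) in Section~\ref{seccio2} applied to $\psi$ produces a nonzero $g\in L^2(\R)$ with $\langle g,\psi(\cdot-\lambda)\rangle=0$ for all $\lambda\in\Lambda$, that is, the inverse transform of $\widehat{g}\,\overline{\widehat{\psi}}$ vanishes on $\Lambda$. Setting $\widehat{f}=\widehat{g}\,\overline{\widehat{\psi}}/\overline{\widehat{\varphi}}$ leaves the product $\widehat{f}\,\overline{\widehat{\varphi}}=\widehat{g}\,\overline{\widehat{\psi}}$ unchanged, so $f$ annihilates $\{\varphi(t-\lambda)\}_{\lambda\in\Lambda}$ as well; and by the lower bound $|\overline{\widehat{\psi}}/\overline{\widehat{\varphi}}|\le A^{-1}(1+\xi^{2n})$, whence $\widehat{f}\in L^2(\R)$ because $g$, being the preimage of a Fock function, is Schwartz-like and $\widehat{g}$ absorbs the polynomial weight. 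Thus $\{\varphi(t-\lambda)\}$ does not span $L^2(\R)$.

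For part (2) I would run the indicator argument of Section~\ref{seccio2} directly on the generalized transform
\begin{equation*}
V_{\varphi}f(z)=\int_{\R}\widehat{f}(\xi)\,\overline{\widehat{\varphi}(\xi)}\,e^{2\pi i z\xi}\,d\xi,
\end{equation*}
which is entire of order $2$ whenever $f\in L^2(\R)$ and vanishes on $\Lambda$ exactly when $f$ annihilates the system. Feeding the upper bound on $|\widehat{\varphi}|$ into the growth estimate of this integral as in Section~\ref{seccio2} controls the indicator of $V_{\varphi}f$ through the Gaussian exponent $b$ alone (the polynomial factor contributing only a negligible $O(\log|z|^2)$), so that at $\theta=\pm\pi/4$ one obtains the same type threshold as for the Fock space but with $b$ in place of the normalization $1$. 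Comparing $V_{\varphi}f$ with the Weierstrass product $\Pi(z)=\prod_{\lambda}(1-z^4/\lambda^4)$ and invoking the $2$-trigonometric convexity of the indicator exactly as in the proof of (2) forces $h_{V_{\varphi}f}(\pi/4)\ge\pi\Delta$ or $h_{V_{\varphi}f}(-\pi/4)\ge\pi\Delta$; combined with the upper estimate this yields $\Delta\le b/2$, so no nonzero annihilator exists once $\Delta(\Lambda^+)>b/2$ or $\Delta(\Lambda^-)>b/2$, and the system spans.

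Finally, the $L^1(\R)$ statement follows the pattern of Section~\ref{seccio3}. For (1) I would convolve as in the Gaussian case, reducing spanning of $\{\varphi(t-\lambda)\}$ in $L^1(\R)$ to spanning of a Gaussian system in $L^2(\R)$ by means of Lemmas~\ref{lemma1} and~\ref{generaciopertotselsp}; for (2) the same uniqueness argument applies once one checks that $V_{\varphi}$ maps $L^{\infty}(\R)$ into entire functions of order $2$ whose type is governed by $b$. This is precisely where the extra hypothesis $|\widehat{\varphi}'(\xi)|\le C(1+\xi^{2m})e^{-b\pi\xi^2}$ enters: it guarantees that $\varphi\in L^1(\R)\cap L^{\infty}(\R)$ with the integrability needed to apply the convolution lemmas and to bound $V_{\varphi}f$ for merely bounded $f$. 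I expect the main obstacle to be exactly this bookkeeping for part (2): unlike in part (1), an $L^2$ (or $L^{\infty}$) annihilator carries no a priori smoothness, so one cannot simply divide Fourier transforms, and the whole weight of the argument rests on controlling the indicator of $V_{\varphi}f$ uniformly against the polynomial corrections.
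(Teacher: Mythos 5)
Your part (1) is morally the paper's own argument, but as executed it has a genuine gap. The paper proves this theorem by a one-line reduction: apply Lemma \ref{generaciopertotselsp} (and Lemma \ref{lemma1}) with an ``appropriate $h$'' defined on the Fourier side as a quotient of a Gaussian by $\widehat{\varphi}$ (legitimate because the lower bound forces $\widehat{\varphi}\neq 0$), so that everything reduces to Theorem \ref{teorema}. You instead divide against the Gaussian with exponent \emph{exactly} $a$, so the quotient is only bounded by $A^{-1}(1+\xi^{2n})$, and you must then show $(1+\xi^{2n})\widehat{g}\in L^2(\R)$. Your justification --- that $g$, ``being the preimage of a Fock function, is Schwartz-like'' --- is false: the Bargmann transform is \emph{onto} the Fock space, so preimages of Fock functions are arbitrary $L^2$ functions with no decay whatsoever. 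What saves the argument is not a general property of Fock preimages but the strictness of the density hypothesis: either (i) use that the constructed annihilating Fock function has type strictly less than $\pi/2$, which via time--frequency (modulation space) estimates does give Gaussian decay of $\widehat{g}$ --- a nontrivial step you would have to supply --- or, much simpler, (ii) compare with a Gaussian whose exponent is slightly larger than $a$ (allowed because $\Delta<a/2$ is strict), so the quotient $\widehat{\psi}/\widehat{\varphi}$ decays and the division manifestly stays in $L^2(\R)$. Option (ii) is exactly the paper's ``appropriate $h$.''

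Your part (2) takes a genuinely different route --- redoing the Section \ref{seccio2} indicator argument on $V_{\varphi}f$ rather than reducing to Theorem \ref{teorema} by convolution --- and it can be made to work (note that at the very end you also need the \emph{lower} bound on $|\widehat{\varphi}|$, to pass from $V_{\varphi}f\equiv 0$ to $f=0$). But you never perform the growth estimate, and it does not give what you assert: completing the square in $\int_{\R}|\widehat{\varphi}(\xi)|^2e^{-4\pi y\xi}\,d\xi$ with $|\widehat{\varphi}(\xi)|\leqslant B(1+\xi^{2m})e^{-b\pi\xi^2}$ yields $|V_{\varphi}f(x+iy)|\leqslant C(1+|y|^{2m})e^{\pi y^2/b}$, hence $h_{V_{\varphi}f}(\pm\pi/4)\leqslant\pi/(2b)$ and the conclusion $\Delta\leqslant 1/(2b)$ --- the \emph{reciprocal} of the $b/2$ you claim. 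The same reciprocal occurs in part (1): the Gaussian with $\widehat{\psi}(\xi)=e^{-a\pi\xi^2}$ is a dilate of $\phi$ whose spanning threshold is $1/(2a)$, not $a/2$. (Indeed, with the exponents as literally written, the theorem's hypothesis $a\leqslant b$ is incompatible with the two-sided bound, which forces $b\leqslant a$; the statement is consistent only if the bounds are read as $e^{-\pi\xi^2/a}$ and $e^{-\pi\xi^2/b}$, and then the thresholds $a/2$ and $b/2$ do emerge.) So the dilation bookkeeping you wave through --- ``the thresholds $a/2$ and $b/2$ appear in place of $1/2$'' --- is precisely the content of the theorem, and as written your computation and your stated conclusion contradict each other. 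Your treatment of the $L^1(\R)$ statement follows the paper's pattern (Lemmas \ref{lemma1} and \ref{generaciopertotselsp} plus the derivative condition to get integrability) and is fine in outline, subject to the same corrections.
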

To prove this theorem one must use theorem \ref{teorema} and \ref{generaciopertotselsp} with an appropriate $h$. The condition on $\widehat{\varphi}'$ ensures that $\varphi$ and $h$ will be in $L^1(\R)$. This result improves Zalik's one, i. e. the conditions of theorem $2$ of \cite{Zal} are stronger than the ones here.

Zero sets of the Fock space have been studied in \cite{Zhu}. One can also find information about uniqueness sets of zero excess \cite{ALS,LSe99} and the description of sampling \cite{Jan94,Lyu,SW1} and interpolating sets \cite{S1} of this space, with applications to the Gabor transform (details and more information of this transform in \cite{Charly}).

In \cite{Br} and \cite{BOU} one can find the characterization of the discrete sets
$\Lambda\subseteq\R$ for which there exists a function $\varphi\in
L^1(\R)$ with the property that $\{\varphi(t-\lambda)\}_{\lambda\in\Lambda}$ span $L^1(\R)$ as those having infinite
Beurling-Malliavin density. In \cite{Ole} and \cite{OlU} one can
find results proving that in $L^2(\R)$ there are more sets with
this property, and that a characterization in terms of densities
is not possible.

The same problem studied here but for the Poisson function $1/(1+t^2)$ is totally solved in \cite{BrM07}, and generalized in \cite{AB}, where one can found the convolution lemmas stated here. The author does not know any other examples of functions for which such a study has been made. It would be interesting to study this problem for functions such as $e^{-|t|^n}$.

\end{document}